\numberwithin{equation}{section}
\begin{document}

\newtheorem{theorem}{Theorem}[section]
\newtheorem{lemma}[theorem]{Lemma}
\newtheorem{proposition}[theorem]{Proposition}
\newtheorem{corollary}[theorem]{Corollary}

\theoremstyle{definition}
\newtheorem{definition}[theorem]{Definition}
\newtheorem{example}[theorem]{Example}

\theoremstyle{remark}
\newtheorem{remark}[theorem]{Remark}

\newenvironment{magarray}[1]
{\renewcommand\arraystretch{#1}}
{\renewcommand\arraystretch{1}}

\newcommand{\quot}[2]{
{\lower-.2ex \hbox{$#1$}}{\kern -0.2ex /}
{\kern -0.5ex \lower.6ex\hbox{$#2$}}}

\newcommand{\mapor}[1]{\smash{\mathop{\longrightarrow}\limits^{#1}}}
\newcommand{\mapin}[1]{\smash{\mathop{\hookrightarrow}\limits^{#1}}}
\newcommand{\mapver}[1]{\Big\downarrow
\rlap{$\vcenter{\hbox{$\scriptstyle#1$}}$}}
\newcommand{\liminv}{\smash{\mathop{\lim}\limits_{\leftarrow}\,}}

\newcommand{\Set}{\mathbf{Set}}
\newcommand{\Art}{\mathbf{Art}}
\newcommand{\solose}{\Rightarrow}

\newcommand{\specif}[2]{\left\{#1\,\left|\, #2\right. \,\right\}}

\renewcommand{\bar}{\overline}
\newcommand{\de}{\partial}
\newcommand{\debar}{{\overline{\partial}}}
\newcommand{\per}{\!\cdot\!}
\newcommand{\Oh}{\mathcal{O}}
\newcommand{\sA}{\mathcal{A}}
\newcommand{\sB}{\mathcal{B}}
\newcommand{\sC}{\mathcal{C}}
\newcommand{\sD}{\mathcal{D}}
\newcommand{\sF}{\mathcal{F}}
\newcommand{\sE}{\mathcal{E}}
\newcommand{\sG}{\mathcal{G}}
\newcommand{\sH}{\mathcal{H}}
\newcommand{\sI}{\mathcal{I}}
\newcommand{\sL}{\mathcal{L}}
\newcommand{\sM}{\mathcal{M}}
\newcommand{\sP}{\mathcal{P}}
\newcommand{\sU}{\mathcal{U}}
\newcommand{\sV}{\mathcal{V}}
\newcommand{\sX}{\mathcal{X}}
\newcommand{\sY}{\mathcal{Y}}
\newcommand{\sN}{\mathcal{N}}

\newcommand{\Id}{\operatorname{Id}}
\newcommand{\Aut}{\operatorname{Aut}}
\newcommand{\Mor}{\operatorname{Mor}}
\newcommand{\Def}{\operatorname{Def}}
\newcommand{\Hom}{\operatorname{Hom}}
\newcommand{\Hilb}{\operatorname{Hilb}}
\newcommand{\HOM}{\operatorname{\mathcal H}\!\!om}
\newcommand{\DER}{\operatorname{\mathcal D}\!er}
\newcommand{\Spec}{\operatorname{Spec}}
\newcommand{\Der}{\operatorname{Der}}
\newcommand{\Tor}{{\operatorname{Tor}}}
\newcommand{\Ext}{{\operatorname{Ext}}}
\newcommand{\End}{{\operatorname{End}}}
\newcommand{\END}{\operatorname{\mathcal E}\!\!nd}
\newcommand{\Image}{\operatorname{Im}}
\newcommand{\coker}{\operatorname{coker}}
\newcommand{\tot}{\operatorname{tot}}
\newcommand{\ten}{\bigotimes}
\newcommand{\mA}{\mathfrak{m}_{A}}

\renewcommand{\Hat}[1]{\widehat{#1}}
\newcommand{\dual}{^{\vee}}
\newcommand{\desude}[2]{\dfrac{\de #1}{\de #2}}

\newcommand{\A}{\mathbb{A}}
\newcommand{\N}{\mathbb{N}}
\newcommand{\R}{\mathbb{R}}
\newcommand{\Z}{\mathbb{Z}}
\renewcommand{\H}{\mathbb{H}}
\renewcommand{\L}{\mathbb{L}}
\newcommand{\proj}{\mathbb{P}}
\newcommand{\K}{\mathbb{K}\,}
\newcommand\C{\mathbb{C}}
\newcommand\Del{\operatorname{Del}}
\newcommand\Tot{\operatorname{Tot}}
\newcommand\Grpd{\mbox{\bf Grpd}}
\newcommand{\g}{\mathfrak{g}}



\newcommand{\rh}{\rightarrow}
\newcommand{\contr}{{\mspace{1mu}\lrcorner\mspace{1.5mu}}}

\newcommand{\bi}{\boldsymbol{i}}
\newcommand{\bl}{\boldsymbol{l}}

\newcommand{\MC}{\operatorname{MC}}
\newcommand{\TW}{\operatorname{TW}}

\providecommand{\eprint}[2][]{\href{http://arxiv.org/abs/#2}{arXiv:#2}}


\title[On deformations of pairs $(X,L)$]{Homotopy abelianity of the DG-Lie algebra controlling deformations of pairs (variety with trivial canonical bundle, line bundle)}

\date{\today}

\author{Donatella Iacono}
\address{\newline  Universit\`a degli Studi di Bari,
\newline Dipartimento di Matematica,
\hfill\newline Via E. Orabona 4,
I-70125 Bari, Italy.}
\email{donatella.iacono@uniba.it}
\urladdr{\href{http://galileo.dm.uniba.it/~iacono/}{http://galileo.dm.uniba.it/~iacono/}}

\author{Marco Manetti}
\address{\newline
Universit\`a degli studi di Roma ``La Sapienza'',\hfill\newline
Dipartimento di Matematica \lq\lq Guido
Castelnuovo\rq\rq,\hfill\newline
P.le Aldo Moro 5,
I-00185 Roma, Italy.}
\email{manetti@mat.uniroma1.it}
\urladdr{\href{http://www.mat.uniroma1.it/people/manetti/}{www.mat.uniroma1.it/people/manetti/}}

\begin{abstract}
We investigate the deformations of   pairs $(X,L)$, where $L$ is a line bundle on a smooth projective variety $X$, defined  over an algebraically closed field $\K$ of characteristic 0. In particular, we prove that the DG-Lie algebra controlling the deformations of the pair $(X,L)$ is homotopy abelian whenever $X$ has trivial canonical bundle, and so  these  deformations are unobstructed.

\end{abstract}
\subjclass[2010]{14D15, 17B70, 13D10, 32G08}
\keywords{Deformations of manifold and line bundle, differential graded Lie algebras}
\maketitle
\section{Introduction}

Let $X$ be a  smooth projective variety  with trivial canonical bundle defined  over an algebraically closed field $\K$ of characteristic 0. 
It is well known that  the deformations of $X$ are unobstructed by the Bogomolov-Tian-Todorov (BTT) Theorem. This was first proved over the field of complex numbers by Bogomolov \cite{bogomolov}, under some additional assumptions, and then independently by Tian and Todorov \cite{Tian,Todorov}.
The first algebraic proof was given by Ran and Kawamata, by using (and introducing) the nowadays called $T^1$-lifting method \cite{FM2,Kaw1,Ran}. 
The same method easily applies to prove that the deformations of pairs  $(X,L)$ are also unobstructed, whenever $X$ is a smooth projective variety  with trivial canonical bundle  and $L$ is a line bundle on $X$ (see next Remark~\ref{rem.T1lifting}).

An improvement of the BTT Theorem consists in showing that the differential graded Lie algebra controlling deformations of $X$ as above is quasi-isomorphic to an abelian DG-Lie algebra: this was proved by Goldman and Millson \cite{GoMil2} in the K\"{a}hler case and then by Iacono and Manetti \cite{algebraicBTT} over any algebraically closed field $\K$ of characteristic 0.

The aim of this paper is to use the methods of \cite{DMcoppie} in order to prove that  the DG-Lie algebra controlling deformations of pairs  $(X,L)$ is also quasi-isomorphic to an abelian DG-Lie algebra. By homotopy invariance of deformation functors, this implies that 
the associated deformation functor  is smooth.

\begin{theorem}\label{main theorem 2}  Let $L$ be a line bundle on a  smooth projective variety $X$ defined  over an algebraically closed field $\K$ of characteristic 0.
If $X$ has trivial canonical bundle, 
then the DG-Lie algebra controlling the deformations of the pair $(X,L)$ is homotopy abelian.
\end{theorem}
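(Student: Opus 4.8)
The plan is to realize the DG-Lie algebra $\g$ controlling deformations of the pair $(X,L)$ as a DG-Lie algebra carrying a Cartan homotopy into the endomorphisms of a de Rham-type complex, and then to trivialize its bracket by contracting with a volume form coming from the trivialization of the canonical bundle. This is exactly the strategy used for $X$ alone in the algebraic proof of the Bogomolov--Tian--Todorov theorem, but carried out for the Atiyah algebra of $L$ rather than for the tangent sheaf.

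First I would fix the model $\g=R\Gamma(X,\sD^{1}_{X}(L))$, where $\sD^{1}_{X}(L)$ is the sheaf of differential operators of order $\le 1$ on $L$, a sheaf of Lie algebras under the commutator, sitting in the symbol (Atiyah) exact sequence
\[
0\to \Oh_X\to \sD^{1}_{X}(L)\mapor{\sigma} T_X\to 0 .
\]
This exhibits $\g$ as an extension of the Kodaira--Spencer DG-Lie algebra $R\Gamma(X,T_X)$ by the abelian ideal $R\Gamma(X,\Oh_X)$, with module structure given by the symbol. To produce a Cartan homotopy I would pass to the principal $\mathbb{G}_m$-bundle $\pi\colon P\to X$ associated with $L$: the Atiyah algebra $\sD^{1}_{X}(L)$ is canonically identified with the sheaf $(\pi_*T_P)^{\mathbb{G}_m}$ of invariant vector fields on $P$, the commutator corresponding to the Lie bracket of vector fields, and it acts on the invariant de Rham complex $\sC^\bullet:=(\pi_*\Omega^\bullet_P)^{\mathbb{G}_m}$ by contraction $\bi$ and Lie derivative $\bl=[d,\bi]$. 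Cartan's formulas then give, at the level of $\g$ and of $\End^*\!\big(R\Gamma(X,\sC^\bullet)\big)$, the identities $\bl_{[a,b]}=[\bl_a,\bl_b]$, $\bi_{[a,b]}=[\bi_a,\bl_b]$ and $[\bi_a,\bi_b]=0$, that is, a Cartan homotopy whose associated DG-Lie morphism is $\bl$.

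The hypothesis $K_X\cong \Oh_X$ enters through a volume form on $P$. Choosing a nowhere vanishing $\omega_X\in H^0(X,\Omega^n_X)$ and wedging $\pi^*\omega_X$ with any local invariant $1$-form $\eta$ dual to the fibre direction produces a globally well-defined invariant section $\omega_P:=\pi^*\omega_X\wedge\eta$ of $\Omega^{n+1}_P$; independence of the horizontal part of $\eta$, and $d$-closedness, both follow for dimension reasons, since $\pi^*\omega_X\wedge\pi^*\beta=\pi^*(\omega_X\wedge\beta)$ vanishes for every $1$- or $2$-form $\beta$ on the $n$-dimensional $X$ (in particular $d\omega_P=\pm\,\pi^*(\omega_X\wedge\Theta)=0$ for a curvature form $\Theta$ of $L$). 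Contraction with $\omega_P$ is then an isomorphism of sheaves $\sD^{1}_{X}(L)\xrightarrow{\ \cong\ }\sC^n=(\pi_*\Omega^n_P)^{\mathbb{G}_m}$, carrying the symbol part into $\pi^*\Omega^{n-1}_X\wedge\eta$ and the scalar part $\Oh_X$ into $\pi^*\Omega^n_X$. Hence $a\mapsto \bi_a\,\omega_P$ identifies $\g$ with $R\Gamma(X,\sC^n)$, viewed inside the hypercohomology of $(\sC^\bullet,d)$, and $d$-closedness of $\omega_P$ yields $\bl_a\,\omega_P=d\,(\bi_a\,\omega_P)$.

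I would then conclude by the homotopy abelianity criterion for DG-Lie algebras equipped with a Cartan homotopy developed in \cite{algebraicBTT,DMcoppie}: the contraction $a\mapsto\bi_a\,\omega_P$ is injective in cohomology and lands in a subcomplex of the de Rham complex of $P$ on which the de Rham differential $d$ is controlled by the degeneration of the Hodge--de Rham spectral sequence, available for the smooth projective $X$ over $\K$. For $\sC^\bullet$ this degeneration follows from that of $X$ through the two-step filtration $0\to\Omega^\bullet_X\to\sC^\bullet\to\Omega^{\bullet-1}_X\to 0$, whose connecting map is cup product with $c_1(L)$. Combined with $\bl_a\,\omega_P=d\,\bi_a\,\omega_P$ and $\bi_{[a,b]}=[\bi_a,\bl_b]$, this forces the bracket of $\g$ to be homotopically trivial, so $\g$ is homotopy abelian. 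The main obstacle I anticipate is the bookkeeping around the scalar part of the Atiyah algebra: one must check that the extension by $\Oh_X$ and the Atiyah class $c_1(L)$ are absorbed compatibly—so that $\omega_P$ remains a well-defined $d$-closed form and the contraction remains a quasi-isomorphism into a degenerating complex—and verify the Cartan identities for $\sD^{1}_{X}(L)$ rather than merely for vector fields; once this is in place, the argument parallels the canonical-bundle case.
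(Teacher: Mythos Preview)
Your strategy is sound and is essentially an unpacked version of the paper's argument. The paper compactifies your principal $\mathbb{G}_m$-bundle $P$ to the $\proj^1$-bundle $p\colon Y=\proj(\Oh_X\oplus\sL)\to X$ with divisor $\Delta=\Delta_0+\Delta_\infty$; your $\mathbb{G}_m$-invariant vector fields on $P$ are precisely the sections of $p_*\Theta_Y(-\log\Delta)$, your complex $\sC^\bullet=(\pi_*\Omega^\bullet_P)^{\mathbb{G}_m}$ is $p_*\Omega^\bullet_Y(\log\Delta)$, and your $\omega_P=\pi^*\omega_X\wedge(dt/t)$ is the log volume form trivializing $\Omega^{n+1}_Y(\log\Delta)\cong p^*\Omega^n_X$. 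Having made the identification $R\Gamma(X,\sD^1(L))\simeq R\Gamma(Y,\Theta_Y(-\log\Delta))$, the paper simply observes that $(Y,\Delta)$ is log Calabi--Yau and cites the known theorem that $R\Gamma(Y,\Theta_Y(-\log\Delta))$ is homotopy abelian for such pairs, whereas you rerun the Cartan-homotopy/abstract-BTT machinery that proves that theorem in this particular case. The paper's packaging is cleaner and modular; yours is more self-contained and makes the role of the volume form explicit.

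One point in your sketch needs strengthening: the claim that $E_1$-degeneration for $(\sC^\bullet,d)$ ``follows from that of $X$ through the two-step filtration'' is not complete as stated. Vanishing of the induced $d_1$ on the graded pieces $\Omega^\bullet_X$ and $\Omega^{\bullet-1}_X$ does not formally force vanishing of $d_1$ on $H^q(X,\sC^p)$: a residual component $H^q(\Omega^{p-1}_X)\to H^q(\Omega^{p+1}_X)$ can survive. What is actually needed is the degeneration of the logarithmic Hodge--de Rham spectral sequence for the smooth projective pair $(Y,\Delta)$ (Deligne over $\C$, Deligne--Illusie in general), which is exactly the nontrivial input behind the theorem the paper invokes. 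With that in place your argument closes; just be aware that it is this stronger degeneration, not merely Hodge--de Rham for $X$, that carries the weight.
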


It is well known (see e.g. \cite{huang,DMcoppie}) that the DG-Lie algebra controlling the deformations of the pair 
$(X,L)$  is the algebra $R\Gamma(X,\sD^1(L))$ of the derived sections of the sheaf of first-order differential operators on $L$: this is an object in the homotopy category of DG-Lie algebras and then it is represented by   a
DG-Lie algebra  up to quasi-isomorphism. Over the complex numbers, a possible representative of  $R\Gamma(X,\sD^1(L))$  is given by the Dolbeault resolution  of  $\sD^1(L)$ \cite[Example 2.12]{elena}.

In this paper, we work over any algebraically closed field $\K$ of characteristic 0 and so  we adopt the purely algebraic construction of the Thom-Whitney-Sullivan totalization with respect to any affine open cover, described in \cite[Sections 6 and 7]{DMcoppie}.

The main idea behind the proof of Theorem \ref{main theorem 2}  is the following. Given a pair  $(X,L)$, 
we construct a new pair $(Y,\Delta)$, where $Y$ is a $\proj^1$-bundle on  $X$ and $\Delta$ is a smooth divisor in $Y$. 
Whenever  $X$ has trivial canonical bundle, the pair $(Y,\Delta)$
is a log Calabi-Yau pair.  Then, we conclude the proof showing that   there exists a quasi isomorphism between the DG-Lie algebra controlling the deformations of the pair $(X,L)$ and the homotopy abelian DG-Lie algebra controlling the deformations of the pair $(Y,\Delta)$ (Lemma \ref{lem.2}).

\section{Proof of Theorem~\ref{main theorem 2}}

Let $L$ be a line bundle on  
 a smooth algebraic variety $X$ of dimension $n$ over an algebraically closed field $\K$ of characteristic $0$  and denote by  $\sL=\Oh_X(L)$ the invertible  sheaf of  its sections.
According to  \cite[Section 5]{DMcoppie}, we denote by $\sD(X,\sL)$  the 
sheaf  of the derivations of pairs which is the subsheaf of $\DER_{\K}(\Oh_X,\Oh_X)\times \HOM_{\K}(\sL,\sL)$ consisting of pairs 
$(h,u)$ such that $u(ax)=h(a)x+au(x)$ for every $a\in \Oh_X$ and $x\in \sL$, i.e.,
\[\sD(X,\sL)=\{(h,u)\in \DER_\K(\Oh_X,\Oh_X)\times \HOM_\K(\sL,\sL)\mid u(ae)-au(e)=h(a)e,\; \forall \, a\in \Oh_X, e\in \sL\}.\]

%
%
 It is almost immediate to see
that  $\sD(X,\sL)$ is a sheaf of Lie algebras over $\K$ and that the projection on 
the second factor $(h,u)\mapsto u$ induces an isomorphism with the sheaf $\sD^1(L)$ 
of first-order differential operators \cite[Example 5.2]{DMcoppie}. In particular $\sD(X,\sL)$ is locally free of rank $n+1$
and there exists the following  
exact sequence 
\[
   0 \to \HOM_{\Oh_X}(\sL,\sL)  \to \sD(X,\sL) \to  \Theta_{X } \to 0,
\]
where $\Theta_{X }=\DER_\K(\Oh_X,\Oh_X) $ denotes the tangent sheaf of $X$. Consider the $\proj^1$-bundle
\[p\colon Y=\proj(\Oh_X\oplus \sL)=\proj(\sL^{-1}\oplus \Oh_X)\to  X\] 
together with the two distinguished sections $\Delta_0$ and $\Delta_{\infty}$ corresponding to the two direct summands, namely:
\[ Y=\Delta_{\infty}\cup \Spec_X(\oplus_{n\ge 0}\sL^n)=\Delta_0\cup \Spec_X(\oplus_{n\le 0}\sL^n)\,.\]

If $\Delta=\Delta_0+\Delta_{\infty}$, then  we have the adjunction formula $p^*K_X=K_Y+\Delta$:
this follows from the relative Euler exact sequence \cite[Exercise III.8.4]{Ha77}. It can be also proved by noticing that if $\omega$ is a rational $n$-form on $X$  then $p^*\omega\wedge \dfrac{dt}{t}$, where $t$ is a local 
coordinate frame  on the fibres of $L$, is a well defined rational 
$n+1$-form on $Y$. Note that if $X$ has trivial canonical bundle, then $\Delta$ is an anticanonical divisor in $Y$, i.e., $(Y, \Delta)$ is a log Calabi-Yau pair.

We denote by $\Theta_Y$ the tangent sheaf of $Y$ and by $\Theta_Y(-\log\Delta)$ the subsheaf of vector fields that are tangent to the smooth divisor $\Delta$.
Note that $\Theta_Y(-\log\Delta)$  is the subsheaf of the derivations of the sheaf $\mathcal{O}_Y$ preserving the ideal sheaf of $\Delta$. Moreover, since $\Delta \subset Y$ is smooth, there exists the following
 exact sequence
\[
0 \to {\Theta}_Y(-\log \Delta) \to {\Theta}_Y \to N_{\Delta / Y} \to 0\,.
\]

\begin{lemma}\label{lem.1} 
In the above notation
$R^ip_*\Theta_Y(-\log\Delta)=0$ for every $i>0$ and there exists a natural $\Oh_X$-linear isomorphism of sheaves of 
Lie algebras 
\[ \Psi\colon  \sD(X,\sL)\xrightarrow{\;\simeq\;} p_*\Theta_Y(-\log\Delta)\,.\]
\end{lemma}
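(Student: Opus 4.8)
The plan is to exploit that $p\colon Y\to X$ is a $\proj^1$-bundle, so that both assertions are local on $X$ and can be read off the geometry of the fibres. First I would record the exact sequence of log tangent sheaves attached to the \emph{horizontal} divisor $\Delta$,
\[
0\to \Theta_{Y/X}(-\log\Delta)\to \Theta_Y(-\log\Delta)\xrightarrow{\;dp\;} p^*\Theta_X\to 0,
\]
where $\Theta_{Y/X}(-\log\Delta)$ denotes the vertical vector fields tangent to $\Delta$. Surjectivity on the right holds precisely because $\Delta=\Delta_0+\Delta_\infty$ is a union of sections, so a vector field on $X$ admits a horizontal lift that is automatically tangent to $\Delta$; the kernel is by definition the vertical part. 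Applying $Rp_*$ to this sequence reduces Lemma~\ref{lem.1} to understanding the two outer terms.

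For the cohomological statement, the key observation is that the fibrewise $\mathbb{G}_m$-action scaling the summand $\sL$ in $\Oh_X\oplus\sL$ produces a global vector field $\xi$ on $Y$ (in a local frame in which $\sL$ is trivial with fibre coordinate $t$, one has $\xi=t\,\partial_t$), which vanishes exactly along $\Delta$ and generates $\Theta_{Y/X}(-\log\Delta)$; hence $\Theta_{Y/X}(-\log\Delta)\cong\Oh_Y$. Since $p_*\Oh_Y=\Oh_X$ and $R^ip_*\Oh_Y=0$ for $i>0$, and since the projection formula gives $R^ip_*(p^*\Theta_X)=\Theta_X\otimes R^ip_*\Oh_Y$, both outer terms have vanishing higher direct images. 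The long exact sequence for $Rp_*$ then yields $R^ip_*\Theta_Y(-\log\Delta)=0$ for all $i>0$, together with a short exact sequence
\[
0\to p_*\Theta_{Y/X}(-\log\Delta)\to p_*\Theta_Y(-\log\Delta)\xrightarrow{\;dp\;}\Theta_X\to 0,\qquad p_*\Theta_{Y/X}(-\log\Delta)\cong\Oh_X.
\]

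To produce $\Psi$, I would send a derivation of pairs $(h,u)\in\sD(X,\sL)$ to the vector field on $Y=\proj(\Oh_X\oplus\sL)$ induced by the diagonal first-order operator $\operatorname{diag}(h,u)$ on $\Oh_X\oplus\sL$. Because this operator preserves each summand, the induced vector field preserves both sections $\Delta_0,\Delta_\infty$, so it is tangent to $\Delta$ and defines a section of $p_*\Theta_Y(-\log\Delta)$. In a local frame, writing $u(fe)=(h(f)+bf)e$ with $b\in\Oh_X$, one computes $\Psi(h,u)=\tilde h+b\,\xi$, where $\tilde h$ is the horizontal lift of $h$. This exhibits $\Psi$ as a morphism from the defining sequence
\[
0\to \HOM_{\Oh_X}(\sL,\sL)\to \sD(X,\sL)\to \Theta_X\to 0
\]
to the sequence above, inducing the identity on $\Theta_X$ and the isomorphism $\operatorname{id}_{\sL}\mapsto\xi$ on the sub-objects (both canonically $\Oh_X$). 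The five lemma then shows that $\Psi$ is an isomorphism of sheaves.

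The part that needs genuine care — and which I regard as the main point — is that $\Psi$ is a homomorphism of sheaves of \emph{Lie} algebras, not merely of $\Oh_X$-modules. I would verify this in the local frame above, where the bracket on $\sD(X,\sL)$ sends $((h_1,b_1),(h_2,b_2))$ to $([h_1,h_2],\,h_1(b_2)-h_2(b_1))$, and where the identities $[\tilde h_1,\tilde h_2]=\widetilde{[h_1,h_2]}$, $[\tilde h_i,\xi]=0$ and $[\xi,\xi]=0$ show that the Lie bracket of the corresponding vector fields equals $\Psi$ applied to this bracket; as $\Psi$, its source and its target are $\Oh_X$-linear and the construction is frame-independent, this local check suffices. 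The only remaining subtlety is to confirm that the quotient in the log tangent sequence is genuinely $p^*\Theta_X$, with no twist along $\Delta$, which is guaranteed by the horizontality of $\Delta$.
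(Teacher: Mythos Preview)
Your argument is correct, and it reaches the same conclusion by a different organizational route than the paper. The paper works entirely in local coordinates: it assumes $X=\Spec A$ affine with $L$ trivial, so $Y=\proj^1\times X$ and $\Theta_Y(-\log\Delta)=p^*\Theta_X\oplus\Oh_Y$ is a direct sum; the vanishing of $R^ip_*$ then follows immediately from $R^ip_*\Oh_Y=0$ and the projection formula. The map $\Psi$ is defined as the unique $\K$-derivation of $R=A[z]$ restricting to $h$ on $A$ and to $u$ on $\sL\subset R$, with explicit formula $\Psi(h,u)=h+u(z)\,d/dz$; uniqueness simultaneously handles the gluing over an affine cover and, implicitly, the Lie-algebra compatibility (since the commutator of two such derivations again restricts correctly). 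By contrast, you package the same ingredients into the global relative log tangent sequence $0\to\Theta_{Y/X}(-\log\Delta)\to\Theta_Y(-\log\Delta)\to p^*\Theta_X\to 0$, identify the sub via the Euler field, define $\Psi$ conceptually as the vector field on $\proj(\Oh_X\oplus\sL)$ induced by the diagonal operator $\operatorname{diag}(h,u)$, and conclude by the five lemma. Your approach makes the naturality and frame-independence of $\Psi$ transparent from the outset, at the cost of verifying the bracket identity by hand; the paper's approach is more elementary and gets Lie compatibility for free from the uniqueness characterization, at the cost of an explicit gluing step. Both rely on the same underlying facts about $Rp_*\Oh_Y$ and the projection formula, and your local formula $\Psi(h,u)=\tilde h+b\,\xi$ is exactly the paper's $h+u(z)\,d/dz$ rewritten.
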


\begin{proof} In the sequel, we shall denote by $U=Y-\Delta_{\infty}$ the total space of the 
dual bundle of $L$. 
Assume first that $X=\Spec A$ is an affine scheme and that $L$ is the trivial line bundle.
Thus  $Y=\proj^1\times X$,  $\Delta=\{0,\infty\}\times X$ and then 
\[ \Theta_Y(-\log\Delta)=p^*\Theta_X\oplus q^*\Theta_{\proj^1}(-0-\infty),\]
where $q$ is the projection onto $\proj^1$. Since $\Theta_{\proj^1}(-0-\infty)$ is trivial, we have that
$\Theta_Y(-\log\Delta)=p^*\Theta_X\oplus \Oh_Y$.
Since $p_*\Oh_Y=\Oh_X$ and $R^ip_*\Oh_Y=0$ for $i>0$   \cite[Exercise III.8.4]{Ha77}, by the projection formula \cite[Exercise III.8.3]{Ha77} we have 
\[ p_* \Theta_Y(-\log\Delta)=\Theta_X\oplus \Oh_X, \qquad  R^ip_* \Theta_Y(-\log\Delta)=0,\quad i>0\,.\]
We point out that $p_* \Theta_Y(-\log\Delta)$ is a  locally free sheaf  of rank $n+1$, whose sections are of type
$\chi+at\dfrac{d\;}{dt}$, where $\chi\in \Theta_X$,  $a\in \Oh_X$ and $t$ is a linear  coordinate 
on the fibres of $L$.

We have $U=\Spec R$, $R=\Gamma(X,\oplus_{n\ge 0}\sL^n)$; the choice of an isomorphism 
$z\colon \Oh_X\to \sL$ provides an isomorphism of $A$-algebras $R=A[z]$. 
In this setting, there exists a unique $A$-linear morphism of Lie algebras 
\[ \Psi\colon  \Gamma(X,\sD(X,\sL))\to \Gamma(X,p_*\Theta_U)=\Gamma(U,\Theta_U)=\Der_{\K}(R,R),\]
such that $\Psi(h,u)(a)=h(a)$ and $\Psi(h,u)(m)=u(m)$ for every $a\in A\subset R$ and 
every $m\in \sL$. The unicity is clear by Leibniz formula: for the existence, using the isomorphism $R=A[z]$ 
it is sufficient to define 
\[\Psi(h,u)=h+u(z)\frac{d~}{dz}\;.\]
We have  $\left(h+u(z)\dfrac{d~}{dz}\right)(a)=h(a)$ for every $a\in A$. 
Every section of $\sL$ is of type $az$ for some $a\in A$ and then 
\[ \left(h+u(z)\dfrac{d~}{dz}\right)(az)=h(a)z+u(z)a=u(az)\,.\]
Notice that, since $u(z)=zk$ for some $k\in A$, the vector field $\Psi(h,u)$ is tangent to $\Delta$ and then 
belongs to $\Gamma(X,p_*\Theta_Y(-\log \Delta))$.

The local unicity  allows to glue the morphisms $\Psi$ on open affine subsets to a morphism of quasi-coherent sheaves
$\sD(X,\sL)\to p_*\Theta_U$ whose image is contained 
in $p_*\Theta_Y(-\log\Delta)$. Moreover, the explicit local description of $\Psi$ implies  that 
$\Psi\colon \sD(X,\sL)\to p_*\Theta_Y(-\log\Delta)$ is an isomorphism of locally free sheaves of rank $n+1$.
\end{proof}

Given a coherent sheaf of  Lie algebra $\sF$ over $X$, we denote by $R\Gamma(X,\sF)$   the DG-Lie algebra 
of derived sections. 
 Let $\sU=\{U_i\}_{i\in I}$ be an open affine cover of $X$, we  denote by $C^*(\mathcal{U}, \sF)$  the \v{C}ech  complex of $\sF$, i.e., the   cochain complex associated with the semicosimiplicial Lie algebra:
\[ \sF(\mathcal{U}):\quad \xymatrix{ {\prod_i\mathcal{F}(U_i)}
\ar@<2pt>[r]\ar@<-2pt>[r] & { \prod_{i,j}\mathcal{F}(U_{ij})}
      \ar@<4pt>[r] \ar[r] \ar@<-4pt>[r] &
      {\prod_{i,j,k}\mathcal{F}(U_{ijk})}
\ar@<6pt>[r] \ar@<2pt>[r] \ar@<-2pt>[r] \ar@<-6pt>[r]& \cdots},\]
where the face operators  $  \displaystyle \partial_{h}\colon
{\prod_{i_0,\ldots ,i_{k-1}} \sF(U_{i_0 \cdots  i_{k-1}})}\to
{\prod_{i_0,\ldots ,i_k} \sF(U_{i_0 \cdots  i_k})}$
are given by
\[\partial_{h}(x)_{i_0 \ldots i_{k}}={x_{i_0 \ldots
\widehat{i_h} \ldots i_{k}}}_{|U_{i_0 \cdots  i_k}},\qquad
\text{for }h=0,\ldots, k\,.\]
An explicit model of $R\Gamma(X,\sF)$ is given by  the Thom-Whitney-Sullivan totalization  $ \Tot(\sU,\sF)$
 associated with the semicosimplicial Lie algebra $\sF(\mathcal{U})$, see e.g. \cite{FMM,DMcoppie}. 
 Note that the homotopy class of the DG-Lie algebra  $ \Tot(\sU,\sF)$ does not depend on the choice of the open affine cover and,  
by Whitney's theorem (see e.g. \cite[Sec. 2]{algebraicBTT}), there exists a canonical quasi-isomorphism of complexes
\[ I\colon \Tot(\mathcal{U},\sF)\to 
C^*(\mathcal{U}, \sF)\,.\]  
As we already point out, the  sheaf of Lie algebras $\sD(X,\sL)$ is   isomorphic to the sheaf $\sD^1(L)$, and so the  DG-Lie algebra
 $R\Gamma(X,\sD(X,\sL))$ controls the deformations of the pair $(X,L)$  \cite[Theorem 7.5]{DMcoppie}.
 As regard the deformations of the pair $(Y,\Delta)$, these are controlled by the DG-Lie algebra $R\Gamma(Y,\Theta_Y(-\log\Delta))$
 \cite[Section 4.3.3 (i)]{KKP} or \cite[Theorem 4.3]{donacoppie}.

\begin{lemma}\label{lem.2} 
The morphism $\Psi\colon \sD(X,\sL)\to p_*\Theta_Y(-\log\Delta)$ induces a quasi-isomorphism of DG-Lie algebras
\[ \Psi\colon R\Gamma(X,\sD(X,\sL))\to R\Gamma(Y,\Theta_Y(-\log\Delta))\,.\]
Therefore the DG-Lie algebra controlling the deformations of the pair $(X,L)$ is quasi-isomorphic to the DG-Lie algebra controlling the deformations of the pair $(Y,\Delta)$.
\end{lemma}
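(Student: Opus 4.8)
The plan is to compute both sides through compatible affine covers and to reduce the statement to a levelwise comparison of semicosimplicial Lie algebras, since the Thom--Whitney--Sullivan totalization turns levelwise quasi-isomorphisms into quasi-isomorphisms of DG-Lie algebras. First I would fix an affine open cover $\sU=\{U_i\}_{i\in I}$ of $X$, noting that since $X$ is separated all finite intersections $U_{i_0\cdots i_k}$ are again affine. Because Lemma~\ref{lem.1} gives an isomorphism of sheaves of Lie algebras $\Psi\colon\sD(X,\sL)\to p_*\Theta_Y(-\log\Delta)$, applying $\Tot(\sU,-)$ yields at once an isomorphism of DG-Lie algebras
\[\Tot(\sU,\sD(X,\sL))\cong R\Gamma(X,p_*\Theta_Y(-\log\Delta)).\]
It then remains to construct a natural quasi-isomorphism $R\Gamma(X,p_*\Theta_Y(-\log\Delta))\simeq R\Gamma(Y,\Theta_Y(-\log\Delta))$ of DG-Lie algebras.

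Next I would exploit the two relative charts $U=Y-\Delta_{\infty}=\Spec_X(\oplus_{n\ge 0}\sL^n)$ and $V=Y-\Delta_0=\Spec_X(\oplus_{n\le 0}\sL^n)$, which are affine over $X$; hence for every affine $W\subseteq X$ the opens $U\cap p^{-1}(W)$, $V\cap p^{-1}(W)$ and their intersection are affine, and $\{U\cap p^{-1}(U_i),\,V\cap p^{-1}(U_i)\}_{i\in I}$ is an affine cover $\sV$ of $Y$. Pairing the cover $\sU$ of the base with the two-element vertical cover $\{U,V\}$ of the fibres produces a bicosimplicial Lie algebra on $Y$ whose totalization computes $R\Gamma(Y,\Theta_Y(-\log\Delta))$. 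I would then evaluate this totalization in two steps, using that iterated and simultaneous totalization agree (an Eilenberg--Zilber-type property of $\Tot$).

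Totalizing first in the vertical ($\proj^1$) direction replaces each base multi-index $U_{i_0\cdots i_k}$ by the two-term \v{C}ech complex of $\Theta_Y(-\log\Delta)$ for the affine cover $\{U,V\}$ of $p^{-1}(U_{i_0\cdots i_k})$, which computes $R\Gamma(p^{-1}(U_{i_0\cdots i_k}),\Theta_Y(-\log\Delta))$. Here the key input is Lemma~\ref{lem.1}: since $R^ip_*\Theta_Y(-\log\Delta)=0$ for $i>0$ and $U_{i_0\cdots i_k}$ is affine, this complex is concentrated in degree $0$ and equals $\Gamma(U_{i_0\cdots i_k},p_*\Theta_Y(-\log\Delta))$. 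Thus the vertical totalization is a levelwise quasi-isomorphism onto the semicosimplicial Lie algebra $(p_*\Theta_Y(-\log\Delta))(\sU)$, and the subsequent horizontal totalization returns precisely $R\Gamma(X,p_*\Theta_Y(-\log\Delta))$. Composing the three quasi-isomorphisms identifies $\Psi$ on derived sections as a quasi-isomorphism of DG-Lie algebras, and the final assertion about deformation functors follows from the identifications of the controlling algebras recalled before the lemma.

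The main difficulty I anticipate is not cohomological but structural: one must be sure that the two-step evaluation of the totalization respects the Lie brackets, that is, that the Thom--Whitney--Sullivan totalization of a bicosimplicial DG-Lie algebra may be computed by iterating the totalization in each direction up to a quasi-isomorphism of DG-Lie algebras. The vanishing of $R^ip_*$ and the affineness of the intersections are comparatively routine; the care lies in keeping all comparison maps multiplicative, so that the degeneration of the fibrewise \v{C}ech complex collapses the totalization at the level of DG-Lie algebras and not merely of complexes.
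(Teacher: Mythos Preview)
Your argument is correct but takes a more elaborate route than the paper's. The paper simply fixes an affine cover $\sV=\{V_j\}_{j\in J}$ of $Y$ together with a refining map $r\colon J\to I$ satisfying $p(V_j)\subset U_{r(j)}$; such a refinement, composed with $\Psi$, induces directly a morphism of semicosimplicial Lie algebras $\sD(X,\sL)(\sU)\to \Theta_Y(-\log\Delta)(\sV)$, hence a genuine DG-Lie algebra morphism between the Thom--Whitney--Sullivan totalizations, with no bicosimplicial machinery or Eilenberg--Zilber argument needed. That this map is a quasi-isomorphism is then checked after composing with the Whitney integration maps $I$ to the \v{C}ech complexes, where it follows from the Leray spectral sequence and the vanishing $R^ip_*\Theta_Y(-\log\Delta)=0$ of Lemma~\ref{lem.1}. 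Your approach has the virtue of separating base and fibre contributions and of making the use of the higher-direct-image vanishing very explicit, but the structural difficulty you flag---multiplicativity of iterated totalization---is entirely absent from the paper's argument, since a refinement of covers already yields a map of semicosimplicial Lie algebras. Incidentally, the specific product cover you build comes equipped with exactly such a refining map to $\sU$, so you could shorten your own proof by invoking it directly instead of passing through the bicosimplicial diagram.
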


\begin{proof} Let $\sU=\{U_i\}_{i\in I}$ be an open affine cover of $X$ and take an open affine cover 
$\sV=\{V_j\}_{j\in J}$ of $Y$ together with a refining map $r\colon J\to I$ such that 
$p(V_j)\subset U_{r(j)}$ for every $j$. 
The above data give a morphism of \v{C}ech complexes 
\[ C^*(\sU,p_*\Theta_Y(-\log\Delta)))\to C^*(\sV,\Theta_Y(-\log\Delta)),\]
which is a quasi-isomorphism by  Leray spectral sequence (see e.g., \cite[Theorem 16.11]{Voisin}).
Therefore, the morphism $\Psi$ of Lemma~\ref{lem.1}  gives a quasi-isomorphism of \v{C}ech complexes 
\[ \Psi\colon C^*(\sU,\sD(X,\sL))\to C^*(\sV,\Theta_Y(-\log\Delta))\,.\]
Similarly, $\Psi$ and the refining map induce a morphism of  
semicosimplicial Lie algebra 
\[\sD(X,\sL)(\mathcal{U})\to\Theta_Y(-\log\Delta)(\mathcal{V})\]  
and so a DG-Lie algebras morphism of   the Thom-Whitney-Sullivan totalizations
\[ \Psi\colon \Tot(\sU,\sD(X,\sL))\to \Tot(\sV,\Theta_Y(-\log\Delta)),\]
which is a quasi-isomorphism by Whitney's Theorem.
\end{proof}         

If $X$ has trivial canonical bundle then  $(Y, \Delta)$ is a log Calabi-Yau pair, thus Theorem~\ref{main theorem 2}  is an immediate consequence of Lemma~\ref{lem.2} and of the following theorem
 \cite[Corollary 5.4]{donacoppie} or  \cite[Lemma 4.19]{KKP}, cf. \cite[Sec. 4.2]{iacono2017}.

\begin{theorem}\label{thm.main3}
Let $Y$ be a smooth projective variety defined over an algebraically closed field of characteristic 0 and  $\Delta \subset Y$ a smooth divisor. If $(Y, \Delta)$ is a log Calabi-Yau pair, then the DG-Lie algebra $R\Gamma(Y,\Theta_Y(-\log\Delta))$ is homotopy abelian.
\end{theorem}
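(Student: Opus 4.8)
The plan is to adapt the algebraic proof of the classical Bogomolov--Tian--Todorov theorem given in \cite{algebraicBTT} to the logarithmic setting, replacing the holomorphic volume form by a logarithmic one and the $\partial\overline{\partial}$-lemma by the degeneration of the logarithmic Hodge--de Rham spectral sequence. Throughout I would use that the DG-Lie algebra $R\Gamma(Y,\Theta_Y(-\log\Delta))$ may be modeled, as in Lemma~\ref{lem.2}, by the Thom--Whitney--Sullivan totalization over an affine cover.

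First I would exploit the hypothesis. Since $\Delta$ is smooth one has $\Omega^n_Y(\log\Delta)\cong\omega_Y(\Delta)$, and the log Calabi--Yau condition $K_Y+\Delta\sim 0$ says precisely that this line bundle is trivial; I would fix a nowhere-vanishing global section $\nu\in H^0(Y,\Omega^n_Y(\log\Delta))$, which is automatically $d$-closed. A local computation, in coordinates where $\Delta=\{x_1=0\}$ so that $\Theta_Y(-\log\Delta)$ is generated by $x_1\partial_1,\partial_2,\dots,\partial_n$ and $\nu=\frac{dx_1}{x_1}\wedge dx_2\wedge\cdots\wedge dx_n$ up to a unit, shows that interior product with $\nu$ is an $\Oh_Y$-linear isomorphism of sheaves
\[\bi_{(-)}\nu\colon \Theta_Y(-\log\Delta)\xrightarrow{\;\simeq\;}\Omega^{n-1}_Y(\log\Delta).\]

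Next I would install the Cartan calculus on the logarithmic de Rham complex $(\Omega^\bullet_Y(\log\Delta),d)$. Each logarithmic vector field $\chi$ acts by interior product $\bi_\chi$ of degree $-1$ and by Lie derivative $\bl_\chi=[d,\bi_\chi]$ of degree $0$, subject to the Cartan identities
\[\bi_\chi\bi_\eta+\bi_\eta\bi_\chi=0,\qquad \bi_{[\chi,\eta]}=[\bl_\chi,\bi_\eta],\qquad \bl_\chi=[d,\bi_\chi],\]
so that $\bi$ exhibits $\Theta_Y(-\log\Delta)$ as a sheaf-theoretic Cartan homotopy into $\END^{\bullet-1}(\Omega^\bullet_Y(\log\Delta))$. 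Totalizing over an affine cover $\sU$ of $Y$ yields the DG-Lie algebra $L=\Tot(\sU,\Theta_Y(-\log\Delta))$ representing $R\Gamma(Y,\Theta_Y(-\log\Delta))$, the complex $A=\Tot(\sU,\Omega^\bullet_Y(\log\Delta))$ computing the algebraic logarithmic de Rham cohomology $H^*_{\mathrm{dR}}(Y\smallsetminus\Delta)$ with total differential $D=d+\delta_{\mathrm{tot}}$, and an induced Cartan homotopy. Because $\nu$ is a global section, interior product with $\nu$ commutes with the restriction maps and hence with the totalization differential $\delta_{\mathrm{tot}}$; since $\Theta_Y(-\log\Delta)$ carries no internal differential, the differential of $L$ is $\delta_{\mathrm{tot}}$ alone, and the sheaf isomorphism above globalizes to an isomorphism of complexes $(L,\delta_{\mathrm{tot}})\cong(\Tot(\sU,\Omega^{n-1}_Y(\log\Delta)),\delta_{\mathrm{tot}})$, the single column of the logarithmic de Rham bicomplex.

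Finally I would invoke the abelianity criterion for Cartan homotopies of \cite{algebraicBTT}. Contracting the algebraic Tian--Todorov identity against $\nu$ gives, for totalized logarithmic vector fields $a,b$,
\[\bi_{[a,b]}\nu= d(\bi_a\bi_b\nu)\ \pm\ \bi_a\,d(\bi_b\nu)\ \mp\ \bi_b\,d(\bi_a\nu),\]
so that, under the column isomorphism above, the bracket of $L$ is carried by $\bi_{(-)}\nu$ into de Rham coboundaries of $A$. The degeneration at $E_1$ of the logarithmic Hodge--de Rham spectral sequence of the smooth projective pair $(Y,\Delta)$ in characteristic zero---the algebraic substitute for the $\partial\overline{\partial}$-lemma---then provides the homotopy that trivializes this $d$-exact term against the column differential $\delta_{\mathrm{tot}}$, so the transported bracket is $\delta_{\mathrm{tot}}$-homotopically trivial. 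Since $\bi_{(-)}\nu$ is an isomorphism of complexes, it follows that $L$ is $L_\infty$-isomorphic to an abelian DG-Lie algebra, i.e.\ $R\Gamma(Y,\Theta_Y(-\log\Delta))$ is homotopy abelian. The step I expect to be the main obstacle is precisely this last one: establishing the logarithmic Hodge--de Rham degeneration over an arbitrary algebraically closed field of characteristic zero and matching it correctly with the Cartan homotopy bookkeeping at the level of Thom--Whitney--Sullivan totalizations, so that the de Rham--exact bracket becomes genuinely null-homotopic for the DG-Lie structure on $L$.
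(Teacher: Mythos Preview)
The paper does not actually prove Theorem~\ref{thm.main3}; it quotes it from \cite[Corollary~5.4]{donacoppie}, \cite[Lemma~4.19]{KKP}, and \cite[Sec.~4.2]{iacono2017}. Your sketch is correct and is precisely the strategy carried out in those references: trivialize $\Omega_Y^n(\log\Delta)$ by a global logarithmic volume form $\nu$, use the contraction $\bi_{(-)}\nu$ to identify $\Theta_Y(-\log\Delta)\simeq\Omega_Y^{n-1}(\log\Delta)$, promote the Cartan calculus on $\Omega_Y^{\bullet}(\log\Delta)$ to a Cartan homotopy on Thom--Whitney--Sullivan totalizations, and then apply the abstract Bogomolov--Tian--Todorov criterion of \cite{algebraicBTT,iacono2017}, whose hypothesis is supplied by Deligne's $E_1$-degeneration of the logarithmic Hodge--de~Rham spectral sequence for the smooth projective pair $(Y,\Delta)$ in characteristic~$0$.

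One small clarification on your last paragraph: the way the degeneration enters is not by directly ``homotoping away'' the $d$-exact bracket, but by ensuring that the $L_\infty$-morphism produced by the Cartan homotopy---from $L=\Tot(\sU,\Theta_Y(-\log\Delta))$ into the (abelian) homotopy fibre of the inclusion of $d$-closed elements in $\Tot(\sU,\Omega_Y^{\bullet}(\log\Delta))$---is a quasi-isomorphism; $E_1$-degeneration is exactly the statement that the column inclusion $\Tot(\sU,\Omega_Y^{n-1}(\log\Delta))\hookrightarrow\Tot(\sU,\Omega_Y^{\bullet}(\log\Delta))$ is injective in cohomology with image in $\ker(d)$. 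With that rephrasing your argument is complete.
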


Finally,  Theorem~\ref{main theorem 2} is an immediate consequence of Lemma~\ref{lem.2} and Theorem~\ref{thm.main3}.

\begin{corollary}\label{cor.main theorem1} Let $L$ be a line bundle on a  smooth projective variety $X$ defined  over an algebraically closed field $\K$ of characteristic 0.
If $X$ has trivial canonical bundle, then the pair  $(X,L)$ has unobstructed deformations.
\end{corollary}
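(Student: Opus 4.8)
The plan is to deduce the corollary directly from Theorem~\ref{main theorem 2} through the homotopy invariance of deformation functors. First I would recall, as observed immediately after the statement of Theorem~\ref{main theorem 2}, that the DG-Lie algebra $R\Gamma(X,\sD(X,\sL))$ controls the deformations of the pair $(X,L)$: this uses the isomorphism $\sD(X,\sL)\simeq\sD^1(L)$ of sheaves of Lie algebras together with \cite[Theorem 7.5]{DMcoppie}. Consequently, the deformation functor of the pair $(X,L)$ is isomorphic to the functor $\Def_{R\Gamma(X,\sD(X,\sL))}$ attached to this DG-Lie algebra.

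Next I would invoke the hypothesis that $X$ has trivial canonical bundle, under which Theorem~\ref{main theorem 2} asserts that $R\Gamma(X,\sD(X,\sL))$ is homotopy abelian, that is, quasi-isomorphic in the homotopy category of DG-Lie algebras to a DG-Lie algebra whose bracket vanishes identically.

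The final step is the general principle that the deformation functor associated with a homotopy abelian DG-Lie algebra is smooth, hence unobstructed. Since deformation functors are invariant under quasi-isomorphisms of DG-Lie algebras, it suffices to check this for an abelian DG-Lie algebra $(V,d,0)$; there the Maurer--Cartan equation $dx+\tfrac12[x,x]=0$ degenerates to the linear condition $dx=0$, so the associated functor is governed by a linear, and therefore smooth, condition and admits no nonzero obstructions. Combining this with the previous two steps yields the unobstructedness of the deformations of $(X,L)$.

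As there is no genuine computation involved, the only point requiring care --- and thus the sole potential obstacle --- is the bookkeeping identifying the controlling DG-Lie algebra and the appeal to the standard smoothness criterion for homotopy abelian DG-Lie algebras; both ingredients are already furnished by the earlier sections, so the corollary follows at once.
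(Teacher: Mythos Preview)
Your proposal is correct and follows essentially the same approach as the paper: invoke Theorem~\ref{main theorem 2} to conclude that the controlling DG-Lie algebra is homotopy abelian, then use the standard fact that homotopy abelian DG-Lie algebras yield unobstructed deformation functors. The paper's own proof is terser (a single sentence citing \cite{manRENDI}), but your additional unpacking of the Maurer--Cartan argument is accurate and harmless.
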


\begin{proof} It is sufficient to recall that every deformation problem controlled by a homotopy abelian 
DG-Lie algebra is unobstructed (see e.g.  \cite{manRENDI}).
\end{proof}

\begin{remark}
Over the field of complex number, the unobstructedness  of the   pair  $(X,L)$ was also proved using a geometric approach in the fifth version of \cite{LP}.
\end{remark}

\begin{remark}\label{rem.T1lifting}
It is also possible to prove Corollary~\ref{cor.main theorem1} by using the $T^1$-lifting theorem in view of the following observation (for simplicity of exposition we assume here $\K=\C$). 
The short exact sequence of sheaves 
\[
   0 \to \Oh_X  \to \sD(X,\sL) \to  \Theta_{X } \to 0,
\]
gives a cohomology exact sequence
\[ H^0(\Theta_X)\xrightarrow{\alpha_0}H^1(\Oh_X)\to H^1(\sD(X,\sL))\to H^1(\Theta_X)\xrightarrow{\alpha_1}H^2(\Oh_X)\]
where $\alpha_0$ and $\alpha_1$ are given by contraction  with 
the first Chern class of $L$. Then, the $T^1$-lifting theorem applies if the corank of 
$\alpha_0$ and the nullity of $\alpha_1$ are invariant under deformations of the pair $(X,L)$ over $\C[t]/(t^n)$, for every $n>0$.

Let $n$ be the dimension of $X$, every choice of a holomorphic volume form gives two isomorphisms
$\Oh_X\simeq \Omega_X^n$, $\Theta_X\simeq \Omega_X^{n-1}$, and the maps 
\[ H^0(\Omega_X^{n-1})\xrightarrow{\alpha_0}H^1(\Omega^n_X),\qquad  
H^1(\Omega_X^{n-1})\xrightarrow{\alpha_1}H^2(\Omega^n_X)\]
are given by the cup product with $c_1(L)$. Since the ranks of the two maps
\[ H^{n-1}(X,\C)\xrightarrow{c_1(L)}H^{n+1}(X,\C),\quad H^{n}(X,\C)\xrightarrow{c_1(L)}H^{n+2}(X,\C)\]
are clearly invariant under deformations of the pair $(X,L)$, the conclusion follows immediately 
from the Hodge decomposition in cohomology.

\end{remark}

\end{document}